\newtheorem{thm}{Theorem}[section] 
\newtheorem{lem}[thm]{Lemma}
\newtheorem{prop}[thm]{Proposition}
\newtheorem{cor}[thm]{Corollary}
\theoremstyle{definition}
\newtheorem{defn}[thm]{Definition}
\theoremstyle{remark}
\newcommand{\h}{\mathcal{H}}
\newcommand{\s}{\quad }
\theoremstyle{plain}
\theoremstyle{definition}
\theoremstyle{remark}
\begin{document}
\title[ Pair Frames]
{ Pair Frames }

\author{Abolhassan Fereydooni}
\address{Department of Basic Sciences \\ Ilam University \\Ilam, Iran}
\email{fereydooniman@yahoo.com,  a.fereydooni@mail.ilam.ac.ir }

\author{Ahmad Safapour}
\address{Department of Mathematics\\Vali-e-Asr University of Rafsanjan\\ Rafsanjan, Iran}
\email{safapour@vru.ac.ir}
\subjclass{Primary 42C15. }
\keywords{  Bessel sequence, pair Bessel, frame, pair frame, $(p,q)$-pair Bessel, $(p,q)$-pair frame, near identity pair frame. }
\begin{abstract}
In this paper, a new concept related to the frame theory is introduced; the notion of \textit{pair frame}. By investigating some properties of such frames, it is shown that pair frames are a generalization of ordinary frames. Some classes of pair frames are considered such as \textit{$(p,q)$-pair frames} and \textit{near identity pair frames}.
\end{abstract}
\maketitle


\noindent \vskip 0.8 true cm 
\section{\bf {\bf \em{\bf Introduction}}} 
%

In 1946, Gabor \cite{1946TheoryofGabor} introduced a method for reconstructing functions (signals) using a family of elementary functions. Later in 1952, Duffin and Schaeffer \cite{1951NonHorDuff} presented a similar tool in the context of nonharmonic Fourier series and this is the starting point of frame theory. After some decades, Daubechies, Grossmann and Meyer \cite{1986PainlessDube} announced formally the definition of frame in the abstract Hilbert spaces.

In the past two decades, frame theory has become an interesting and fruitful field of mathematics with abundant applications in other branches of sciences. 
There are numerous applications of operators in the definition of pair fram, multiplier operators. As a particular way to implement time-varian fitters, Gabor frame multipliers are used \cite{2003 A First Survey of Gabor Multipliers - Feichtinger Nowak}; also known as Gabor filters \cite{2002 Linear Time-Frequency Filters- Matz  Hlawatsch}. Such operators find applications in psychoacoustics \cite{2010 Time-Frequency Sparsity by Removing Perceptually - Balazs  Laback}, denoising \cite{2011 A Time-Frequency Method for Increasing the- Majdak  Balazs}, computational auditory scene analysis \cite{2006 Computational Auditory Scene Analysis- Wang  Brown}, virtual acoustics \cite{2007 Multiple Exponential Sweep Method for - Majdak  Balazs} and seismic data analysis \cite{2005 The Gabor Transform Pseudodifferential- Margrave  Gibson}. 

The main idea of the frame theory is reconstructing elements of a function space using some special subsets of it.
The interested readers are referred to \cite{2008FrameBookChris} and \cite{2000ArtofCasa} for more details. Some generalizations of frame significance have been presented such as fusion frames(frame of subspaces) \cite{2003FramSubsCasa}, generalized frames \cite{2006G-framSun} and continuous frames \cite{2005ContinuousFramesFornasie}. \\

The present paper is organized as follows: In section 2, some conditions equivalent to the concept of frames are introduced. These equivalent condition are motivations of defining \emph{pair frames} and \emph{near identity pair frames}. Some other equivalent conditions of frames, based on the pair frame operator, are derived there. The importance and role of pair frames in frame theory in three useful aspect are considered. Also some frame-like inequalities for pair frames are given.
The (\emph{$p,q$)-pair frames (Bessels)} are introduced in section 3. The notion of \emph{near identity pair frame} is presented in section 4 and some results related to this are proved.

Here, we recall the definition of frames and some preliminary notations. The notation $\mathbb{I}$ denotes a countable index set and the subscripts $i$ belong to $\mathbb{I}$. $ \mathcal{H} $ is a Hilbert space with inner product $ \langle .,. \rangle $. The bounded operators on $\h$ denoted by $ \mathcal{B } (\h)$. A frame in $\h$ is defined as below.\\ 


\begin{defn} \label{d: frame} 
A subset $ F= \{ f_i \} \subset \mathcal{H} $ is a \textbf{frame} for $ \mathcal{H}$ if there exist constants $A,B>0$ such that for every $f \in \h $ 

\begin{equation}\label{e: ordinary frame} 
A\|f\|^2\leqslant\sum_{i} | \langle f , f_i \rangle |^2\leqslant B\|f\|^2 .
\end{equation} 
Constants $A $ and $B$ are called lower and upper frame bounds, respectively. If the upper inequality in (\ref{e: ordinary frame}) holds, $ F$ is said to be a \textbf{Bessel sequence} with bound $B$.
\end{defn} 

If there is another Bessel sequence $ G=\{ g_i \}\subset \h$ satisfying \begin{equation}\label{e: expanition} f=\sum_i \langle f, g_i \rangle f_i , \end{equation} for every $f \in \h $, $ G $ is said to be a \textbf{dual} of $F $. The above identity shows that for reconstructing $f \in \mathcal{H}$ we need a sequence of scalars $\{ \langle f, f_i \rangle \}$ and another sequence $ G=\{ g_i \}$ of vectors of $\h$. Sometimes in this paper, $F $ in the Definition \ref{d: frame} will be called an ordinary frame for $\h$ in contrast of the other types of frames which will be introduced. 
\begin{prop}\label{p: Bessels-pair Bessels} 
A family $F=\{f_i\}\subset \mathcal{H}$ is a Bessel sequence for $ \mathcal{H}$ if and only if the operator 
\begin{equation}\label{e:Bessel Operator} 
S (S_F): \mathcal{H} \rightarrow \mathcal{H}, \s S(f):= \sum \langle f, f_i \rangle f_i,
\end{equation} 
is well-defined operator. In this situation, $S$ is a bounded operator.
\end{prop} 
\begin{proof} 
Suppose that $F=\{f_i\}$ is a Bessel sequence. Then there is a constant $B>0$ such that 
$$ \sum | \langle f,f_i \rangle |^2 \leqslant B \| f \|^2 , $$ 
for every $f\in \mathcal{H}$. Therefore 
\[ U : \mathcal{H} \rightarrow \ell^2 , \quad U(f)=\{ \langle f, f_i \rangle \}, \] 
is a well-defined and bounded operator. Also its adjoint is 
\[U^* : \ell^2 \rightarrow \mathcal{H}, \quad U^*(\{c_i\})= \sum c_i f_i . \] 
Hence the operator 
\[ S:= U^*U: \mathcal{H} \rightarrow \mathcal{H}, \quad S(f)=\sum \langle f, f_i \rangle f_i , \] 
is a well-defined and bounded.
For the converse, let the operator defined in (\ref{e:Bessel Operator}) be a well-defined and bounded. Then for every $f \in \mathcal{H}$,
\[\sum |\langle f, f_i \rangle |^2= \langle S(f) , f\rangle = | \langle S(f) , f\rangle | \leqslant \|S\| \|f\|^2 . \] 
\end{proof} 

Let $G= \{ g_i \}, F=\{ f_i \} \subset \h $ be Bessel sequences for $ \h$. Define 
\begin{align*} 
U_G : \h \to \ell^2 & , \s U_G(f)= \{ \langle f, g_i \rangle\},\\ 
T_F : \ell^2 \to \h &, \s T_F( \{c_i\})= \sum c_i f_i .
\end{align*} 

For $V \in \mathcal{B}(\mathcal{H})$, $ \mathcal{N}(V) $ and $ \mathcal{R}(V)$ denote the kernel and the range of $T$, respectively.
A partial order on $ \mathcal{B}(\mathcal{H})$ can be defined as follows. For $V, W \in \mathcal{B}(\mathcal{H})$ we write $V \leqslant W $ $ (V < W)$ whenever for every $f \in \h $ 
\[ \langle V(f),f \rangle \leqslant \langle V (f) , f \rangle \s ( \langle V (f) , f \rangle < \langle V(f) , f \rangle , f\neq 0 ) . \] 
A self adjoint operator $V \in \mathcal{B}(\mathcal{H}) $ is called nonnegative (positive) if for every $f \in \h $ 
\[ 0 \leqslant \langle V(f) , f \rangle \quad \s (0 < \langle V (f) , f \rangle , f\neq 0) . \] 
\noindent \vskip 0.8 true cm 

\section{\bf {\bf \em{\bf Pair Frames}}} 
%


Next theorem presents some equivalent conditions for frames and
is a useful motivation for defining pair frames and near
identity pair frames.


\begin{thm}\label{t: frame}
For a sequence $F=\{ f_i\} \subset \h $ and its corresponding
operator $S_F $ defined in (\ref{e:Bessel Operator}), the
following statements are equivalent:
\begin{enumerate}
\item $F $ is a frame for $\h$.
\item $S_F$ is well defined, bounded and there exist constants
$A,B>0$ such that
\[ A\leqslant S_F \leqslant B.\]
\item $S_F$ is well defined, bounded and there exists
$\alpha\in(0,\infty)$ such that
\[ \|I-\alpha S_F\| < 1. \]
\item $S_F$ is well defined, bounded and invertible.
\item $S_F$ is well defined, bounded and surjective.
\end{enumerate}
\end{thm}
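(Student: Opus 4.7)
The plan is to establish the cycle $(1) \Leftrightarrow (2) \Rightarrow (3) \Rightarrow (4) \Rightarrow (5) \Rightarrow (2)$, which closes all the implications. Throughout, I would exploit the fact established in Proposition \ref{p: Bessels-pair Bessels} that $S_\Lambda$ is well defined if and only if $\Lambda$ is a Bessel sequence, and in that case $S_\Lambda$ is automatically bounded, self-adjoint, and nonnegative, since the identity $\langle S_\Lambda f, f\rangle = \sum_i \|\Lambda_i f\|^2 \geq 0$ is the essential algebraic engine of the whole argument.

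For $(1) \Leftrightarrow (2)$ I would simply substitute this identity into the frame inequality: $A\|f\|^2 \leq \sum_i \|\Lambda_i f\|^2 \leq B\|f\|^2$ is, term by term, the same as $A\langle f,f\rangle \leq \langle S_\Lambda f, f\rangle \leq B \langle f,f\rangle$, i.e.\ $AI \leq S_\Lambda \leq BI$. Well-definedness of $S_\Lambda$ in $(2)$ is the Bessel (upper bound) condition.

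For $(2) \Rightarrow (3)$, the operator $S_\Lambda$ is self-adjoint with spectrum in $[A,B]$, so for the choice $\alpha = 2/(A+B)$ the spectrum of $I - \alpha S_\Lambda$ lies inside $[-(B-A)/(B+A),\,(B-A)/(B+A)]$, giving $\|I - \alpha S_\Lambda\| \leq (B-A)/(B+A) < 1$. The implication $(3) \Rightarrow (4)$ is a Neumann-series argument: writing $\alpha S_\Lambda = I - (I - \alpha S_\Lambda)$, the right-hand side is invertible with bounded inverse $\sum_{n\geq 0} (I - \alpha S_\Lambda)^n$, so $S_\Lambda$ is invertible. And $(4) \Rightarrow (5)$ is immediate.

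The main obstacle is $(5) \Rightarrow (2)$, since this is the only place where a genuine functional-analytic step is required. Here I would use that $S_\Lambda$ is self-adjoint, so $\mathcal{N}(S_\Lambda) = \mathcal{R}(S_\Lambda)^\perp$; surjectivity then forces $\mathcal{N}(S_\Lambda) = \{0\}$, and the bounded inverse theorem gives a bounded $S_\Lambda^{-1}$. Because $S_\Lambda$ is nonnegative, the square root $S_\Lambda^{1/2}$ is available and satisfies
\[
\|f\|^2 = \|S_\Lambda^{-1/2} S_\Lambda^{1/2} f\|^2 \leq \|S_\Lambda^{-1}\|\,\langle S_\Lambda f, f\rangle,
\]
so $\langle S_\Lambda f, f\rangle \geq \|S_\Lambda^{-1}\|^{-1} \|f\|^2$; together with the upper bound $\langle S_\Lambda f, f\rangle \leq \|S_\Lambda\|\,\|f\|^2$ this yields $(2)$ with $A = \|S_\Lambda^{-1}\|^{-1}$ and $B = \|S_\Lambda\|$. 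This last step is the crux of the proof and is the only one worth writing out in real detail.
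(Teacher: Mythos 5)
Your proposal is correct, but it closes the equivalences along a different path than the paper does. The paper does not run a single cycle: it proves $(1)\leftrightarrow(2)$, then both $(2)\rightarrow(3)$ and $(3)\rightarrow(2)$ directly (the latter by a fairly long computation extracting the lower bound $A=C-D$ from $\|I-\alpha S_\Lambda\|<D/C$), cites operator theory for $(3)\rightarrow(4)$, proves $(4)\rightarrow(2)$ by a spectral argument ($S_\Lambda$ positive and invertible forces $\sigma(S_\Lambda)\subset[A,B]$ with $A>0$ by compactness of the spectrum), and finally handles $(4)\leftrightarrow(5)$ via $\mathcal{N}(S_\Lambda)=\mathcal{R}(S_\Lambda)^\perp$. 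Your cycle $(1)\leftrightarrow(2)\Rightarrow(3)\Rightarrow(4)\Rightarrow(5)\Rightarrow(2)$ dispenses with the two back-implications $(3)\rightarrow(2)$ and $(4)\rightarrow(2)$ entirely, which is structurally cleaner; the price is that all the analytic weight lands on $(5)\Rightarrow(2)$, where you combine the paper's $(5)\rightarrow(4)$ kernel argument with the square-root trick $\|f\|^2=\|S_\Lambda^{-1/2}S_\Lambda^{1/2}f\|^2\leq\|S_\Lambda^{-1}\|\langle S_\Lambda f,f\rangle$ (using $\|S_\Lambda^{-1/2}\|^2=\|S_\Lambda^{-1}\|$, the $C^*$-identity) in place of the paper's spectral-inclusion argument. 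Both are standard and correct; your choice $\alpha=2/(A+B)$ in $(2)\Rightarrow(3)$ is also the optimal one, versus the paper's $\alpha=1/B$. The only point worth making explicit in a write-up is that positivity of $S_\Lambda=\Lambda^*\Lambda$ (hence the existence of $S_\Lambda^{1/2}$) is what licenses the square-root step, and that this positivity comes from the identity $\langle S_\Lambda f,f\rangle=\sum_i\|\Lambda_i f\|^2$, which you have already flagged as the engine of the argument.
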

\begin{proof}
From Proposition \ref{p: Bessels-pair Bessels} we know that the
sequence $F $ is a Bessel sequence for $\h$ if and only if
$S_F$ is a well defined and bounded operator. Using notations
mentioned before and the proof of Proposition \ref{p:
Bessels-pair Bessels}, it follows that there is a constant
$B>0$ such that for every $f \in \h $,

$$ 0 \leqslant \sum_i |\langle f ,f_i \rangle |^2 \leqslant B \|f\|^2 .$$

Also since for every $f \in \h $,
\begin{equation}\label{e:Bessel equality}
\sum_i |\langle f ,f_i \rangle |^2= \langle S_F (f) , f\rangle ,
\end{equation}
$$ $$
then
\[ 0 \leqslant S_F \leqslant B.\]
$(1)\Leftrightarrow(2).$ $ F $ is a Bessel sequence if and only if $S_F$ is well defined and $0 \leqslant S_F \leqslant B$. The inequality notion of operators on Hilbert spaces and equation (\ref{e:Bessel equality}) prove the equivalence $(1)\Leftrightarrow(2).$ \\
$(2) \Rightarrow (3). $ It is proved in the standard textbooks of frame theory (see for example \cite{2008FrameBookChris}). But we restate the proof here.
Since $A \leqslant S_{F} \leqslant B$, then
$$ \frac{A}{B} \leqslant \frac{1}{B}S_{F} \leqslant 1 .$$
Therefore,
$$ 0 \leqslant 1- \frac{1}{B}S_{F} \leqslant 1- \frac{A}{B} \lneqq 1.$$
By putting $\alpha= \frac{1}{B}$, we get
$$ \|I- \alpha S_{F} \| \leqslant 1- \frac{A}{B} \lneqq 1 . $$
$ (3)\Rightarrow (2).$ Let $ \|I-\alpha S_F\| < 1 $ for
some $\alpha \in (0, \infty)$. Put $ C=1 /\alpha $. Then a
positive number $D$ can be found such that $ 0< D<C$. So
\begin{equation}\label{e:5}
\|I- \frac{1}{C} S_F\| < D/C < 1.
\end{equation}
By putting $ A=C-D$, we have
\[ 0\leqslant \| I- \frac{{1}}{C}S_F \| \leqslant \frac{C-A}{C}< 1.\]
On the other hand, for every $f \in \h $ with $\| f \|=1$,
\begin{equation}\label{e: 8}
\frac{C-A}{C} = \frac{C-A}{C} \|f\|^2= \langle \frac{C-A}{C}f , f \rangle .
\end{equation}
Since $ S_F $ and hence $ I- \frac{{1}}{C}S_F$ is self-adjoint, by using \ref{e:5}
\begin{equation}\label{e: 10}
\sup _{\| f \| = 1} | \langle (I- \frac{{1}}{C} S_F ) (f) , f \rangle | = \| I- \frac{{1}}{C}S_F \| \leqslant \frac{D}{C} = \frac{C-A}{C}.
\end{equation}
Self-adjointness of $ I- \frac{{1}}{C} S_F $ implies that $ \langle
(I- \frac{{1}}{C} S_F ) f , f \rangle $ to be a real number for every
$f \in \mathcal{H}. $ By keeping this in mind and using
(\ref{e: 8}) and (\ref{e: 10}) we get
\[ \langle (I- \frac{{1}}{C} S_F ) (f) , f \rangle \leqslant \langle\frac{C-A}{C} f , f \rangle .\]
for every $f \in \h $ such that $\| f \|=1$. A simple calculation implies that for every $f \in \h $ with $\| f \|=1$
\[ \langle Af ,f \rangle \leqslant \langle S_F (f) , f \rangle .\]
Therefore, for all $f \in \h $,
\begin{equation}\label{e: 12}
A\| f\| ^2=\langle Af ,f \rangle \leqslant \langle S_F (f) , f \rangle .
\end{equation}
Since for every $f \in \h $
\begin{equation}\label{e: 13}
\langle Af ,f \rangle = A \| f \| ^2 \geq 0 ,
\end{equation}
the relation (\ref{e: 12}) yields that $ \langle S_F (f) , f
\rangle $ is nonnegative for every $ f \in \mathcal{H} $. So
$$ \langle S_F (f) , f \rangle = |\langle S_F (f) , f \rangle | ,$$
for every $f \in \h $. By putting $ B=\| S_F \| $ we obtain
\begin{equation}\label{e: 14}
\begin{split}
\langle S_F (f) , f \rangle =| \langle S_F (f) , f \rangle | \leqslant \| S_F \| \| f \| ^2 = B \| f \| ^2 = \langle B f,f \rangle .
\end{split}
\end{equation}
for every $f \in \h $. Relations (\ref{e: 12}) together with (\ref{e:
14}) implies that
\[ A \leqslant S_F \leqslant B. \]
$(3) \Rightarrow(4)$. It is a well-known result in the operator theory (see \cite{1980FuncConw}). \\
$(4) \Rightarrow(2)$. It is proved in Proposition 2.7 of \cite{2008G-framNaja-Farou}, but for the
sake of completeness we restate the proof. The operator $S_F$ is bounded and
positive, therefore $\sigma(S_F)\subset [0,\infty)$. Invertibility of $S_F$
implies that $ \sigma(S_F)$ does
not contain zero. We know that $\sigma(S_F)$ is compact. Hence there are nonnegative numbers $A$ and $B$
with $ A\leqslant B < \infty $ such that $\sigma(S_F) \subset [A,B]$.
Since $0 \notin \sigma(S_F)$ we can choose $ A>0$. Therefore $A \leqslant S_F \leqslant B$. \\
$ (4) \Rightarrow (5). $ It is obvious.\\
$ (5) \Rightarrow (4). $ It is enough to show that $S_{F}$ is one to one. Since $S_{F}$ is self adjoint and onto then
\[ \mathcal{N}(S_F)= \mathcal{N}(S_{F}^*) = {\mathcal{R}(S_F)}^\perp =\{ 0 \}. \]
\end{proof}
Well defindness and invertibility of $S_F $ in Theorem
\ref{t: frame} is the essence definition of pair Bessels and pair frames.

\begin{defn}\label{d: pair Bessel} 
For two sequences $F=\{f_i\}$, $G=\{ g_i\} \subset \h $ and a scalar sequence $m=\{m_i\}$, we say the triple $(m, G,F )$ 
is an \textbf{$m$-pair Bessel} for $\h$ if the operator 
\begin{equation}\label{e: pair Bessel} 
S_{mFG} : \mathcal{H} \rightarrow \mathcal{H}, \s S_{mFG} (f) =\sum_{i}m_i\langle f, g_i \rangle f_i,
\end{equation} 
is well-defined. i.e. the series converges for every $f \in \mathcal{H} $.
If the series converges unconditionally we will call $(m, G,F )$ an \textbf{unconditional $m$-pair Bessel}. If $m=\{1\}$, $ ( G,F )$ will be called a \textbf{pair Bessel}.
\end{defn} 
By principle of uniform boundedness, the operator $ S_{mFG}$ is bounded.
For $V \in \mathcal{B}(\mathcal{H})$ and $F=\{f_i\} \subset \h$, define
$$V F =\{V f_i\}, \s {m}F=\{ {m_i} f_i \}. \s $$ 
Clearly $S_{mFG}=T_{mF} U_G$. This describes the reason of using the notation $ S_{mFG}$ as the operator associated to a pair Bessel (frame) $ (m,G,F)$. 

\begin{defn}\label{d: pair frame} 
For that $F=\{f_i\} $, $ G=\{g_i\}$ and $m=\{m_i\}$ as in Definition \ref{d: pair Bessel}, let $ (m, G,F )$ be an $m$-pair Bessel for $\h$ . We say that $(m,G,F)$ is an \textbf{$m$-pair frame} for $\h$ if the operator $S_{mFG}$ defined in (\ref{e: pair Bessel}) is invertible. In the case of $m=\{1\}$, $( G,F )$ will be sail to be a \textbf{pair frame}.
\end{defn} 
Similar to the $m$-pair Bessels, with respect to the type of convergence of the series in (\ref{e: pair Bessel}), we can define \textbf{unconditional $m$-pair frame}.
If $(m, G,F )$ is an $m$-pair Bessel (frame) for $\h$ , then $(\overline{m}G,F)$ and $(G,mF)$ are pair frame (Bessel) for $\h$. Also, every pair Bessel (frame) is an $m$-pairpair frame (Bessel) by putting $ m=\{1 \}$. For this, sometimes instead of calling $(m, G,F )$ an $m$-pair pair frame (Bessel), we call it a pair pair frame (Bessel) simply. For this, all results about pair frame (Bessel) are valid for $m$-pair pair frame (Bessel) and vice versa. 


In the case that $(m, G,F )$ is an $m$-pair Bessel, Balazs \cite{2007BasicdefBalazs} called $S_{mFG}$ a \emph{multiplier operator}. Also the invertibility of these operators are studied in \cite{2011StoevaBalazsInvertibilityofmultiplier}.

There are examples for which $S_{mFG}^* \neq S_{\overline{m}GF}$; even well-definedness of $S_{mFG}$ does not imply the well-definedness of $ S_{\overline{m}GF}$ in general \cite[Remark 3.4]{2010UnconditionalConveStoeva}.
The equality $S_{mFG}^* = S_{\overline{m}GF} $ holds under some certain conditions considered in Theorem \ref{t: adjoint frame}. For proving that theorem, we need the following lemma.

\begin{lem} \label{l: uncond convergence series} 
\cite{1998BasisTheoryPrimerHeil,1993UberunbedingteOrlicz,1938OnIntegrationPettis} For a sequence $\{h_i\}\subset\mathcal{H}$, the following statements are equivalent:
\begin{enumerate} 
\item $\sum_i h_i $ converges unconditionally.
\item $\sum_j h_{j} $ converges for every $ \{h_{j}\} \subset \{h_i\}. $ 
\item $\sum_j h_{j} $ converges weakly for every $ \{h_j\} \subset \{h_i\}. $ 
\end{enumerate} 
\end{lem}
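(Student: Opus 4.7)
The plan is to prove the cycle of implications $(1) \Rightarrow (2) \Rightarrow (3) \Rightarrow (1)$. Two of these are essentially free: $(2) \Rightarrow (3)$ is immediate because norm convergence implies weak convergence in any Hilbert space, so nothing needs to be done there.

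For $(1) \Rightarrow (2)$, I would invoke the standard equivalence of unconditional convergence with subseries convergence. Given a subset $J \subset \mathbb{I}$ and a subseries $\sum_{j \in J} f_j$, one interleaves an enumeration of $J$ with an enumeration of $\mathbb{I} \setminus J$ to obtain a permutation $\pi$ of $\mathbb{I}$; from unconditional convergence of $\sum_{i} f_{\pi(i)}$ one extracts, by a standard Cauchy-in-tails argument, convergence of the partial sums that use only indices in $J$.

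The real work is in $(3) \Rightarrow (1)$, which is the Orlicz--Pettis theorem specialized to Hilbert space; this is the main obstacle. I would argue by contradiction: if $\sum_i f_i$ did not converge unconditionally, a standard Cauchy-criterion for unconditional convergence would give $\varepsilon > 0$ and pairwise disjoint finite sets $F_1, F_2, \dots \subset \mathbb{I}$ with
\[
\|g_n\| \geq \varepsilon, \qquad g_n := \sum_{i \in F_n} f_i.
\]
Applying (3) to the subseries indexed by $\bigcup_n F_n$ (in the natural order inherited from $\mathbb{I}$) and noting that the partial sums of $\sum_n g_n$ are a subsequence of those of this subseries, one obtains weak convergence of $\sum_n g_n$. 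In particular $g_n \to 0$ weakly and $\{\|g_n\|\}$ is bounded by the uniform boundedness principle.

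At this point the Hilbert space structure enters in a crucial way: since $g_n \to 0$ weakly, I would inductively extract a subsequence $g_{n_k}$ satisfying $|\langle g_{n_k}, g_{n_j}\rangle| < 2^{-k}$ for all $j<k$, so that
\[
\Bigl\|\sum_{k=1}^{K} g_{n_k}\Bigr\|^2 \geq \sum_{k=1}^{K} \|g_{n_k}\|^2 - 2\sum_{j<k}|\langle g_{n_k}, g_{n_j}\rangle| \geq K\varepsilon^2 - C,
\]
which tends to infinity. But $\sum_k g_{n_k}$ is a grouping of a further subseries of $\{f_i\}$ (indexed by $\bigcup_k F_{n_k}$), so by (3) it converges weakly, forcing its partial sums to be norm-bounded --- a contradiction. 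The delicate step is precisely the almost-orthogonal selection, which is where the Hilbert space hypothesis plays its role and replaces the more abstract Banach space arguments found in the cited references.
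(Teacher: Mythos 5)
Your argument is correct, but there is nothing in the paper to compare it against: the authors state this lemma purely as a citation to Heil, Orlicz and Pettis and give no proof at all, so you have supplied a self-contained argument where the paper relies on the literature. Your handling of $(1)\Rightarrow(2)$ and $(2)\Rightarrow(3)$ is standard, and your $(3)\Rightarrow(1)$ is a legitimate Hilbert-space shortcut through the Orlicz--Pettis theorem: the failure of the finite-set Cauchy criterion produces disjoint finite blocks $F_n$ with $\|g_n\|\geq\varepsilon$, weak convergence of the relevant subseries forces $g_n\to 0$ weakly, and the almost-orthogonal extraction $|\langle g_{n_k},g_{n_j}\rangle|<2^{-k}$ turns the lower bound $K\varepsilon^2-C$ on $\bigl\|\sum_{k\leq K}g_{n_k}\bigr\|^2$ into a contradiction with the boundedness of weakly convergent partial sums. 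This replaces the Bessaga--Pe{\l}czy\'nski/measure-theoretic machinery of the general Banach-space proof with an inner-product computation, which is exactly the right economy for this setting. One small point you should make explicit: for the partial sums of $\sum_n g_n$ to actually appear among the partial sums of the subseries indexed by $\bigcup_n F_n$ (and likewise for $\bigcup_k F_{n_k}$), you need the blocks to be consecutive in the enumeration of $\mathbb{I}$, i.e.\ $\max F_n<\min F_{n+1}$; this is free in your construction if, when negating the Cauchy criterion, you always take the excluded finite set to be an initial segment of $\mathbb{I}$ containing all previously used indices, but as written the claim that one sequence of partial sums is a subsequence of the other is not automatic for arbitrary disjoint $F_n$.
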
 

\begin{thm}\label{t: adjoint frame} 
For two sequences $F=\{f_i\}$, $G=\{g_i\} \subset\h$ and a scalar sequence $m=\{m_i\}$,
\begin{enumerate} 
\item If $ (m, G,F )$ and $(\overline{m},F,G)$ are pair Bessels for $\h$, then $S_{mFG}^*=S_{\overline{m}GF}$. Additionally, in this case when $(m, G,F )$ is a pair frame, so is $(\overline{m},F, G )$.
\item $(m,G,F)$ is an unconditional pair Bessel (frame) for $\h$ if and only if $(\overline{m},F,G) $ is an unconditional pair Bessel (frame) for $\h$. In this case $S_{mFG}^*=S_{\overline{m}GF}$.

\end{enumerate} 
\end{thm}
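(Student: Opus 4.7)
The plan for part (1) is to compute $\langle S_{m\Gamma\Lambda} f, g\rangle$ and $\langle f, S_{\overline{m}\Lambda\Gamma} g\rangle$ term-by-term. Since both series defining these operators converge in norm by hypothesis, continuity of the inner product lets me pull the pairing inside the sum; in either case I land on the common scalar series $\sum_i m_i \langle \Lambda_i f, \Gamma_i g\rangle$, using the elementary identity $\langle \Gamma_i^* \Lambda_i f, g\rangle = \langle \Lambda_i f, \Gamma_i g\rangle$. Uniqueness of the adjoint then delivers $S_{m\Gamma\Lambda}^* = S_{\overline{m}\Lambda\Gamma}$, and the ``additionally'' clause is automatic: an operator on a Hilbert space is invertible iff its adjoint is.

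For part (2), the strategy is to apply Lemma~\ref{l: uncond convergence series} (Orlicz--Pettis) twice, routing everything through weak convergence of arbitrary subseries. Assuming $(m,\Gamma,\Lambda)$ is an unconditional pair Bessel, I fix $g \in \h$ and an arbitrary subsequence $\{i_j\}$; for every $f \in \h$ the adjoint identity computes the partial-sum pairing
\[
\Bigl\langle \sum_{j \leq N} \overline{m_{i_j}} \Lambda_{i_j}^* \Gamma_{i_j} g,\, f \Bigr\rangle = \overline{\Bigl\langle \sum_{j \leq N} m_{i_j} \Gamma_{i_j}^* \Lambda_{i_j} f,\, g \Bigr\rangle}.
\]
By the Lemma applied to the unconditionally convergent series $\sum_i m_i \Gamma_i^* \Lambda_i f$, the right-hand sum has a limit as $N \to \infty$; hence the subseries $\sum_j \overline{m_{i_j}} \Lambda_{i_j}^* \Gamma_{i_j} g$ converges weakly, and a second application of the Lemma upgrades this to unconditional norm convergence. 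Thus $(\overline{m},\Lambda,\Gamma)$ is an unconditional pair Bessel; the reverse implication is the same argument after swapping the roles of the two triples. Once both are known to be pair Bessels, part (1) supplies the adjoint identity $S_{m\Gamma\Lambda}^* = S_{\overline{m}\Lambda\Gamma}$, and invertibility transfers between $S_{m\Gamma\Lambda}$ and $S_{\overline{m}\Lambda\Gamma}$ through the adjoint relation to handle the frame case.

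The main obstacle I expect is precisely the bookkeeping in part (2): one cannot rearrange the vector-valued series directly in norm, because that would demand absolute-style summability we simply do not have in this setting. The reason for passing through weak convergence on every subseries, as packaged in Lemma~\ref{l: uncond convergence series}, is exactly to sidestep this; the calculation above is designed so that the only analytic input needed is scalar convergence of inner-product sums, which the unconditional hypothesis on $(m,\Gamma,\Lambda)$ supplies for free.
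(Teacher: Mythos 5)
Your proposal is correct and follows essentially the same route as the paper: part (1) by pulling the inner product through the two convergent series and invoking uniqueness of the adjoint, and part (2) by two applications of Lemma~\ref{l: uncond convergence series} (subseries $\to$ weak convergence of the transposed subseries $\to$ unconditional convergence), with the frame case handled via invertibility of the adjoint. No gaps.
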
 
\begin{proof} 
(1). Since $(m, G,F )$ and $(\overline{m},F,G)$ are pair Bessels, $S_{mFG}$ and $ S_{\overline{m}GF}$ are well-defined and for every $f,g\in \mathcal{H}$,
\begin{align*} 
\langle S_{\overline{m}GF} (f) , g \rangle=\langle \sum_i \langle f, f_i \rangle g_i , g \rangle & = \sum_i \langle \overline{m_i} \langle f, f_i \rangle g_i , g \rangle \\ 
& =\sum_i \langle f , g_i \rangle f_i =\langle f , S_{mFG} (g) \rangle .
\end{align*} 
Thus $S_{mFG}^*=S_{\overline{m}GF}$ . \\ 
For the pair frame case, invertibility of $S_{mFG}$ results invertibility of $S_{mFG}^* =S_{\overline{m}GF}$.\\ 
(2). Theorem \ref{t: adjoint frame} is proved in an informally published article \cite{2010UnconditionalConveStoeva}. For the sake of completeness the proof is stated completely here. $(m,G,F)$ is an unconditional pair Bessel for $\h$ if and only if for every $f\in \mathcal{H}$, $\sum_i m_i \langle f , f_i \rangle g_i $ converges unconditionally for every $f\in \mathcal{H}$. By Lemma \ref{l: uncond convergence series} it is equivalent to the fact that $\sum_{i\in J} m_{i_{}}\langle f , g_i\rangle f_i $ converges for every subset $\mathbb{J}$ of $\mathbb{I}$ and $f\in \mathcal{H}$. Again by Lemma \ref{l: uncond convergence series} this means that 
\[ \langle \sum_{i\in \mathbb{J}} m_{i_{}} \langle f, f_i\rangle g_i , g \rangle =\sum_{i\in \mathbb{J}} \langle f , \overline{m_i} \langle g, f_i\rangle g_i \rangle . \] 
for every subset $\mathbb{J}$ of $\mathbb{I}$ and $G,F\in \mathcal{H}$.
This means for every subset $\mathbb{J}$ of $\mathbb{I}$ and $g\in \mathcal{H}$, $\sum_{i\in J} \overline{m_{i_{}}} \langle g , f_i\rangle g_i$ converges weakly. So the above lemma implies that $S_{\overline{m}GF}$ is unconditionally well-defined. In the other word $(\overline{m},F,G)$ is an unconditional pair Bessel.

The frame case is a consequence of the invertibility of the adjoint of an invertible operator.
\end{proof} 
More studies on this field are done in \cite{2012AdjointofPairFrames-A.FereydooniA.Safapour}, where the concept of adjoint of pair frames is introduced in Banach space setting. If we use only a single sequence $F=\{f_i\}$ in pair frame (Bessel) definition instead two sequence $F=\{f_i\}$, $G=\{g_i\}$ and put $m=\{1\}$, we see that frames (Bessel sequences) are merely pair frames (Bessels). 
In fact:
\begin{prop} \label{} 
A sequence $F=\{f_i\} \in \h $ is a frame (Bessel sequence) for $\h$ if and only if $(F,F)$ is a pair frame (Bessel) for $\h$.
\end{prop} 
\begin{proof} 
It is an straightforward consequence of Theorem \ref{t: frame} and definition of pair frames.
\end{proof} 

\textbf{First}, the above proposition says that pair frames (Bessels) are generalizations of frames (Bessel sequences). But the main importance of pair frames is not this. \textbf{Second}, Pair frames help us to get new reconstruction formulas like as frames and bases. For $F=\{f_i\}, G=\{g_i\}\subset \h$ and suppose that $(G,F)$ is a pair frame for $\h$ with the associated operator $S$. Invertibility of $S$ is the key property of a pair frame $(G,F)$ for obtaining reconstruction formulas: 

\begin{equation}\label{e:reconstruction formulas of pair frames} 
f= \sum \langle f, g_i\rangle S^{-1} f_i, \s f= \sum \langle f, {S^{-1}}^* g_i\rangle f_i. \s 
\end{equation} 
Advanced methods invented for computing inverse of frame operator and frame expansion, can be conveniently applied to compute the above expansions. 

\textbf{Third}, in practise, what we deal with is pair frames not frames. Consider frame $F=\{f_i\}$ and its operator 
$$ Sf= \sum \langle f, f_i\rangle f_i .$$ 
Computing this expansion (operator) accompany some perturbations: 
$$ S'f= \sum \langle f, {f_i '}^{} \rangle f_i'' .$$ 
It is necessary $S'$ be invertible to obtain reconstruction formulas like as (\ref{e:reconstruction formulas of pair frames}). Well-definedness of $\sum$-expansion and invertability of the operator is elements of pair frame definition of $(F',F'')$; where $F'=\{f_i'\}$ and $F''=\{f_i''\}$.


Let $V \in \mathcal{B(H)}$. The operator $V$ is called bounded below if 
$$ 0 <\inf_{} \{ \|V(f) \| \ | \ f \in \h, \ \|f\|=1 \}. $$ 
For $V \in \mathcal{B(H)}$, define 
\[ |\lfloor V \rfloor| = \inf_{} \{ \|V (f) \| \ | \ f \in \h, \ \|f\|=1 \}. \] 
Hence $V$ is bounded below if and only if $0 <|\lfloor V \rfloor|$.
For constants $A,B>0$ , we write 
\[ A \leqslant |\lfloor V \rceil| \leqslant B, \] 
whenever for every $f \in \h $,
\[ A\|f\| \leqslant \parallel V (f) \parallel \leqslant B \|f\| . \] 
\begin{lem}\label{l: invertibility, bounded below} 
Let $V\in \mathcal{B}(\mathrm{\h})$. The following statements are equivalent:
\begin{enumerate} 
\item $V$ is invertible.
\item $V$ and $V^*$ are bounded below.
\item $V$ and $V^*$ are injective and have closed ranges.
\end{enumerate} 
\end{lem}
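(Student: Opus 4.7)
The plan is to prove the three conditions equivalent via the cycle $(1)\Rightarrow(2)\Rightarrow(3)\Rightarrow(1)$, relying only on standard Hilbert space facts: the orthogonal decomposition $\mathcal{H}=\overline{\mathcal{R}(S)}\oplus\mathcal{N}(S^*)$, the fact that $S$ invertible implies $S^*$ invertible with $(S^*)^{-1}=(S^{-1})^*$, and the bounded inverse theorem.

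For $(1)\Rightarrow(2)$, I would use invertibility to write, for every $f\in\mathcal{H}$,
\[ \|f\|=\|S^{-1}Sf\|\leq \|S^{-1}\|\,\|Sf\|, \]
which gives $\|Sf\|\geq \|S^{-1}\|^{-1}\|f\|$, so $S$ is bounded below. Since $S$ invertible forces $S^*$ invertible, the identical argument applied to $S^*$ supplies the lower bound for $S^*$.

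For $(2)\Rightarrow(3)$, boundedness below immediately yields injectivity: if $Sf=0$, then $|\lfloor S\rfloor|\,\|f\|\leq\|Sf\|=0$, forcing $f=0$; similarly for $S^*$. For closed range, suppose $Sf_n\to g$. Then $\{Sf_n\}$ is Cauchy, and the lower bound $\|S(f_n-f_m)\|\geq |\lfloor S\rfloor|\,\|f_n-f_m\|$ shows $\{f_n\}$ is Cauchy. Passing to its limit $f$ and using continuity of $S$ gives $Sf=g\in\mathcal{R}(S)$; the same argument works for $S^*$.

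For $(3)\Rightarrow(1)$, I would apply the orthogonal decomposition $\mathcal{H}=\mathcal{R}(S)\oplus\mathcal{N}(S^*)$ (valid because $\mathcal{R}(S)$ is closed by hypothesis). Injectivity of $S^*$ gives $\mathcal{N}(S^*)=\{0\}$, so $\mathcal{R}(S)=\mathcal{H}$. Together with the injectivity of $S$, this makes $S$ a continuous bijection, and the bounded inverse theorem yields $S^{-1}\in\mathcal{B}(\mathcal{H})$. The main (mild) obstacle is just ensuring the argument in $(2)\Rightarrow(3)$ for closed range is written carefully, and invoking the correct form of the orthogonal decomposition in $(3)\Rightarrow(1)$; everything else is routine.
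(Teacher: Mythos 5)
Your proof is correct and follows essentially the same route as the paper: the key step $(3)\Rightarrow(1)$ uses the identical decomposition $\mathcal{R}(S)=\overline{\mathcal{R}(S)}=\mathcal{N}(S^*)^{\perp}=\mathcal{H}$. The only difference is organizational: you arrange the implications in a cycle and write out the Cauchy-sequence argument for $(2)\Rightarrow(3)$ explicitly, whereas the paper cites that equivalence to an exercise in Conway.
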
 
\begin{proof} 
$(1) \Rightarrow (2).$ When $V$ is invertible so is $V^*$. Invertible operators are bounded below. \\ 
$(2) \Leftrightarrow (3).$ It is proved in \cite[III.12.Ex5]{1980FuncConw}. \\ 
$(3) \Rightarrow (1)$. It suffices to show that $S$ is onto. Since $V^*$ is one to one and $V$ has closed range 
\[\mathcal{R}(V)=\overline{\mathcal{R}(V)}=\mathcal{N}(V^*)^{\bot}=\mathrm{\h}.\] 
\end{proof} 

We combine Theorem \ref{t: frame}, Lemma \ref{l: invertibility, bounded below} and introduced notations to get some equivalent conditions for a Bessel sequence to be a frame based on its frame operator.

\begin{cor} \label{c: frames} 
Suppose that $F=\{f_i\}$ is a Bessel sequence for $\h$. Then the following statements are equivalent:
\begin{enumerate} 
\item $F=\{f_i\}$ is a frame for $\h$.
\item There exist constants $A,B>0$ such that 
\[ A\leqslant |\lfloor S_F \rceil| \leqslant B. \] 
\item There exist constants $A,B>0$ such that 
\[ A\leqslant S_F \leqslant B.\] 
\item $S_F $ is injective and has closed range.
\item $S_F$ is surjective. 
\item $S_F$ is invertible.

\end{enumerate} 
\end{cor}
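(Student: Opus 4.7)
The plan is to observe that most of the work is already done and organize the proof as a short bookkeeping exercise. Four of the six statements are handled directly by Theorem \ref{t: frame}: items (1), (3), (5), (6) of the corollary correspond exactly to items (1), (2), (5), (4) of that theorem, so their mutual equivalence is inherited with no further argument. The whole task therefore reduces to splicing statements (2) and (4) into this established chain of equivalences.

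The key structural observation enabling this is that $S_\Lambda = \Lambda^{*}\Lambda$ is self-adjoint, so $S_\Lambda^{*} = S_\Lambda$. This lets me apply Lemma \ref{l: invertibility, bounded below} in a collapsed form: the joint conditions on $S$ and $S^{*}$ appearing in the lemma both reduce to single conditions on $S_\Lambda$ alone. Concretely, the equivalence (4) $\Leftrightarrow$ (6) follows at once from item (3) $\Leftrightarrow$ (1) of the lemma, because ``$S_\Lambda$ and $S_\Lambda^{*}$ are injective with closed ranges'' is just ``$S_\Lambda$ is injective with closed range''.

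For (2) $\Leftrightarrow$ (6), I would first unwind the bracket notation: the inequality $A \le |\lfloor S_\Lambda \rceil| \le B$ is, by definition, $A\|f\| \le \|S_\Lambda f\| \le B\|f\|$ for all $f \in \h$, which says that $S_\Lambda$ is bounded and bounded below. Given self-adjointness, $S_\Lambda^{*}$ is then also bounded below, so item (2) $\Rightarrow$ (1) of Lemma \ref{l: invertibility, bounded below} yields invertibility, giving (2) $\Rightarrow$ (6). Conversely, if $S_\Lambda$ is invertible then Lemma \ref{l: invertibility, bounded below} gives the lower bound, and the upper bound $\|S_\Lambda f\| \le \|S_\Lambda\|\,\|f\|$ is automatic because $\Lambda$ is a Bessel sequence and hence $S_\Lambda$ is bounded by Proposition \ref{p: Bessels-pair Bessels}.

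No real obstacle is expected here; the only mild subtlety is remembering that self-adjointness of $S_\Lambda$ is what allows the $S^{*}$-clauses in Lemma \ref{l: invertibility, bounded below} to be suppressed. The rest is a matter of assembling the six statements into the cycle (1) $\Leftrightarrow$ (3) $\Leftrightarrow$ (5) $\Leftrightarrow$ (6) $\Leftrightarrow$ (4) and (6) $\Leftrightarrow$ (2), which completes the equivalence list.
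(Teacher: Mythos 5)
Your proposal is correct and follows exactly the route the paper intends: the corollary is stated as a combination of Theorem \ref{t: frame} and Lemma \ref{l: invertibility, bounded below}, and your splicing of items (2) and (4) into the chain via the self-adjointness of $S_\Lambda$ (which collapses the $S^*$-clauses of the lemma) is precisely the intended argument, made explicit.
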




\begin{thm}\label{t: below inequality} 
Let $F=\{f_i\}$, $G=\{g_i\} \subset \h$ and $m=\{m_i\}$ be a scalar sequence. Assume that $(m, G,F )$ is a pair Bessel for $\h$.
\begin{enumerate} 
\item If there exists a constant $A > 0$ such that for every $f \in \h $ 
\begin{equation}\label{e: pair frame inequlity 0} 
A \|f\|^2 \leqslant | \sum_{i} m_i\langle f , g_i \rangle \langle f_i , f \rangle | ,
\end{equation} 
then $(m,G,F)$ is a pair frame for $\h$. In this case we obtain a frame-like inequalities i.e. there will be a constant $B>0$ such that for every $f \in \h $ 
\begin{equation}\label{e: pair frame inequlity 1} 
A \|f\|^2 \leqslant | \sum_{i} m_i\langle f , g_i \rangle \langle f_i , f \rangle | \leqslant B \|f\|^2 .
\end{equation} 
\item Additionally suppose that $( \overline{m}, F, G )$ is a pair Bessel for $\h$. The triple $(m, G,F )$ (or $(\overline{m},F,G)$) is a pair frame for $\h$ if and only if there are constants $A,B,A',B' > 0$ such that for every $f \in \h $ 
\begin{equation}\label{e: pair frame inequlity 2} 
A \|f\|^2 \leqslant | \sum_{i} m_i\langle f , g_i \rangle \langle f_i , f \rangle | \leqslant B \|f\|^2 .
\end{equation} 
and 
\begin{equation}\label{e: pair frame inequlity 3} 
A' \|f\|^2 \leqslant | \sum_{i} \overline{m_i} \langle f , f_i \rangle \langle g_i , f \rangle | \leqslant B' \|f\|^2 .
\end{equation} 
\end{enumerate} 

\end{thm} 
\begin{proof} 
Since $(m, G,F )$ is a pair Bessel for $\h$, $S_{mFG}$ is well-defined and for $f \in \mathcal{H}$,
$$ \langle S_{mFG} (f) , f \rangle= \sum_{i} m_i\langle f , f_i \rangle \langle f_i , f \rangle . $$ 
(1). By Cuachy-Schwarz inequality we have 
\begin{equation}\label{e: 21} 
A \|f\|^2 \leqslant |\langle S_{mFG} (f) , f\rangle | \leqslant \| S_{mFG} (f) \| \| f \| .
\end{equation} 
for every $f \in \h $. Also for every $f \in \h $ 
\begin{equation}\label{e: 23} 
A \|f\|^2 \leqslant | \langle f, S_{mFG}^* (f) \rangle | \leqslant \| S_{mFG}^* (f) \| \| f \| .
\end{equation} 
Relations (\ref{e: 21}) and (\ref{e: 23}) imply that $ S_{mFG} $ and $ S_{mFG}^* $ are bounded below. Therefore Lemma \ref{l: invertibility, bounded below} shows that $ S_{mFG} $ is invertible and hence $(m,G,F)$ is a pair frame. Now, put $ B=\| S_{mFG} \| $ to obtain (\ref{e: pair frame inequlity 1}). \\ 
(2). Since $( \overline{m}, F , G ) $ is a pair Bessel for $\h$, $S_{\overline{m}GF} $ is well-defined and for $f \in \mathcal{H}$,
$$ \langle S_{\overline{m} GF} (f) , f \rangle= \sum_{i} \overline{m_i} \langle f , f_i \rangle \langle f_i , f \rangle . $$ 

Assume that $(m,G,F)$ is a pair frame for $\h$. By Theorem \ref{t: adjoint frame} one can conclude that $(\overline{m}, F,G)$ is also a pair frame for $\h$. Hence the operators $ S_{mFG}$ and $ S_{\overline{m} GF}$ are invertible and therefore bounded below by Lemma \ref{l: invertibility, bounded below}. By this and the fact that $ S_{mFG}$ and $ S_{\overline{m} GF}$ are bounded we conclude that there are constants $A,B,A',B' > 0$ such that relations (\ref{e: pair frame inequlity 2}) and (\ref{e: pair frame inequlity 3}) holds.

Conversely, assume that (\ref{e: pair frame inequlity 2}) and (\ref{e: pair frame inequlity 3}) holds for some constants $A,B,A',B' > 0$. Then both of $ S_{mFG}$ and $ S_{\overline{m} GF}$ are bounded below and therefore by Lemma \ref{l: invertibility, bounded below} they are invertible. Hence $(m,G,F)$ and $( \overline{m}, F,G)$ are pair frames for $\h$.
\end{proof} 
The frame inequality (\ref{e: ordinary frame}) can be written in the form 
\begin{equation}\label{e: g-frame second form} 
A\|f\|^2\leqslant | \sum_{i} \langle f, f_i \rangle \langle f, f_i \rangle | \leqslant B\|f\|^2 .
\end{equation} 
It can be seen that the inequalities in Theorem \ref{t: below inequality} are similar to the inequalities in ordinary frame definition (see(\ref{e: g-frame second form})). In fact, if instead of using the \emph{pair}s $F$ and $G$, one put $F=G$ and $m=\{1\}$, then all of the relations (\ref{e: pair frame inequlity 1}),(\ref{e: pair frame inequlity 2}) and (\ref{e: pair frame inequlity 3}) coincide with the inequalities in the definition of frames (see(\ref{e: g-frame second form})).


If $V \in \mathcal{B(H)}$, for the real constants $A,B$, let we write 
\begin{equation}\label{e: abs-frame-ineq} 
A \leqslant |\langle V \rangle | \leqslant B,
\end{equation} 
whenever 
\[ A \|f\|^2 \leqslant | \langle V (f) , f\rangle | \leqslant B \|f\|^2 \quad\quad \forall f \in \mathcal{H}. \] 
When $V= V^*$ and $A,B\geq 0 $,
\[ A \leqslant V \leqslant B, \] 
if and only if 
\[A \leqslant | \langle V \rangle | \leqslant B.\] 
Because 
$$ 0 \leqslant \langle V (f) , f\rangle = | \langle V (f) , f\rangle|, \s \forall f \in \h.$$ 
Namely, when $V= V^*$ and $A,B\geq 0 $, the notations $A \leqslant V \leqslant B$ and $ A \leqslant | \langle V \rangle | \leqslant B$ are the same. For example in the that $ F$ is a frame, put $V=S_{F}$. With this notations, we can summarize the above discussion in the following corollary.
\begin{cor} \label{c: frames2} 
Let $F=\{f_i\}$,$G=\{g_i\} \subset \h $ and $m=\{m_i\}$ be a scalar sequence. Then the following statements are equivalent:
\begin{enumerate} 
\item $(m,G,F)$ and $(\overline{m},F,G)$ are pair frames for $\h$.
\item There exist constants $A,B,A',B' > 0$ such that 
\[ A\leqslant |\lfloor S_{mFG} \rceil| \leqslant B, \s A'\leqslant |\lfloor S_{\overline{m}GF}\rceil| \leqslant B'. \] 
\item There exist constants $A,B,A',B' > 0$ such that 
\[ A\leqslant |\langle S_{mFG} \rangle | \leqslant B, \s A'\leqslant |\langle S_{\overline{m}GF}\rangle | \leqslant B' . \] 
\item $ S_{mFG} $ and $ S_{\overline{m}GF}$ are injective and have closed ranges.
\item $ S_{mFG} $ and $ S_{\overline{m}GF}$ are surjective.
\item $ S_{mFG} $ and $ S_{\overline{m}GF}$ are invertible.

\end{enumerate} 
\end{cor} 
In the other words, corollary \ref{c: frames2} extends the results of corollary \ref{c: frames} from frames to pair frames. In \cite{2012BanachPairFrames-A.FereydooniA.Safapour}, where Banach pair frames is introduced, the results of corollary \ref{c: frames2} are considered for Banach pair frames and  the relation  between concept of pair frames and some   other definitions in the frame theory such as atomic decomposition and  Banach frame are characterized. 
\vskip 0.8 true cm 
\section{\bf {\bf \em{\bf $(p,q)$-Pair Frames}}} 
In this section, we introduce an important class of the pair Bessels and pair frames.
\begin{defn}\label{d: p-fram bassel} 
Let $F=\{f_i\} \subset \h$ and $ 1 \leqslant p <\infty $. The sequence $F$ is called a \textbf{$p$-frame} for $\h$ if there are constants $A,B>0$ such that for every $f \in \h $,
\begin{equation}\label{e: p-frames }
A \|f\|^p \leqslant \sum_i | \langle f , f_i \rangle |^p \leqslant B \|f\|^p .
\end{equation} 
$A$ and $B$ are called upper and lower $p$-frame bounds, respectively. If the right hand inequality of (\ref{e: p-frames }) is satisfied for some constant $B>0$, $F$ is called a \textbf{$p$-Bessel sequence} for $\h$ with bound $B$.

\end{defn} 
$p$-frames in Banach spaces are considered in \cite{2003pframesinseparableBanachspChristensenStoeva,2012 Ehler  Okoudjou Minimization of the probabilistic p-frame potential}. Also, Bachoc, and  Ehler \cite{2013 Bachoc  Ehler  Tight p-fusion frames}   considered tight p-frames.

\begin{defn}\label{d: (m,p,q)-pair Bessel} 
Let $F=\{f_i\}$, $G=\{g_i\} \subset \h $ and $m=\{m_i\}$ be a scalar sequence. If $ 1\leqslant p,q < \infty $ with $ 1/p+1/q=1 $, we say that $(m,G,F)$ is an \textbf{ $m$-$(p,q)$-pair Bessel} for $\h$ if $m \in \ell^\infty$ and $F$,$G$ are $p$-Bessel sequence and $q$-Bessel sequence, respectively. When $m=\{1\}$, $(G,F)$ will be said to be a $(p,q)$-pair Bessel.
\end{defn} 
If $F=\{f_i\}$ is a Bessel sequence, it is a $(2,2)$-pair Bessel sequence. Hence Bessel sequences constitute a subclass of the pair Bessels. In what follows, we show that $m$-$(p,q)$-pair Bessels are really pair Bessels; in fact, they are unconditionally pair Bessels.
\begin{thm}\label{t: (p q)-pair frames }
Let $F=\{f_i\}$, $ G=\{g_i\} \subset \h $ and $m=\{m_i\}$ be as Definition \ref{d: (m,p,q)-pair Bessel}. If $(m,G,F)$ is an $m$-$(p,q)$-pair Bessel for $\h$,
then it is an unconditionally $m$-pair Bessel and 
$$\|S_{m FG}\| \leqslant {\parallel m \parallel}_\infty B^{\frac{1}{p}} B'^{\frac{1}{q}} ,$$ 
where $B$, $B'$ are Bessel sequence bounds of $F=\{f_i\}$ and $ G=\{g_i\} $, respectively.
\end{thm}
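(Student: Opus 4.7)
The plan is to reduce the statement to a weak Cauchy estimate for the series $\sum_i m_i \Gamma_i^* \Lambda_i f$ and to extract both the norm bound and unconditional convergence from a single application of Hölder's inequality. The key engine is the duality pairing $\langle \Gamma_i^* \Lambda_i f, g\rangle = \langle \Lambda_i f, \Gamma_i g\rangle$, which replaces the operator-valued terms by scalar inner products that Hölder can be applied to.

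Concretely, first I would fix $f,g\in\h$ and an arbitrary finite set $J_0\subset \I$, and estimate
\[
\Bigl|\sum_{i\in J_0} m_i \langle \Gamma_i^*\Lambda_i f,g\rangle\Bigr|
\le \|m\|_\infty \sum_{i\in J_0}\|\Lambda_i f\|\,\|\Gamma_i g\|.
\]
Applying Hölder's inequality with conjugate exponents $q,p$ and then invoking the $q$-Bessel bound $B'$ for $\Lambda$ and the $p$-Bessel bound $B$ for $\Gamma$ gives
\[
\sum_{i\in J_0}\|\Lambda_i f\|\,\|\Gamma_i g\|
\le \Bigl(\sum_{i\in J_0}\|\Lambda_i f\|^{q}\Bigr)^{1/q}\Bigl(\sum_{i\in J_0}\|\Gamma_i g\|^{p}\Bigr)^{1/p}
\le (B')^{1/q}B^{1/p}\,\|f\|\,\|g\|.
\]
Replacing $J_0$ by the tail $J_0=\{i : i\ge N\}\cap J$ for a subset $J\subset\I$ and letting $N\to\infty$ shows, via the convergence of the series $\sum\|\Lambda_i f\|^q$ and $\sum\|\Gamma_i g\|^p$, that the scalar sequence $\bigl\{\sum_{i\in J, i\le N} m_i\langle \Gamma_i^*\Lambda_i f,g\rangle\bigr\}_N$ is Cauchy. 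Hence $\sum_{i\in J} m_i \Gamma_i^* \Lambda_i f$ converges weakly for every $J\subset\I$, and by Lemma \ref{l: uncond convergence series} it converges unconditionally in norm. This proves that $(m,\Gamma,\Lambda)$ is an unconditional $m$-pair Bessel.

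For the operator-norm estimate, I would specialise the same Hölder bound: taking supremum over unit vectors $g$ yields $\|S_{m\Gamma\Lambda} f\|\le \|m\|_\infty B^{1/p}(B')^{1/q}\|f\|$, which gives the stated inequality (modulo the outer square root in the statement, which I believe is a typographical artifact: with $p=q=2$ the bound $\|m\|_\infty B^{1/p}(B')^{1/q}$ recovers the familiar $\|m\|_\infty\sqrt{BB'}$).

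The only real obstacle is bookkeeping around unconditional convergence: one must be careful to derive weak Cauchyness for every subset $J$ rather than only for the whole index set, since the three equivalent characterizations in Lemma \ref{l: uncond convergence series} require testing on arbitrary subsequences. The Hölder estimate above handles this uniformly because the bound on the tail depends only on $\sum_{i\ge N}\|\Lambda_i f\|^q$ and $\sum_{i\ge N}\|\Gamma_i g\|^p$, both of which vanish as $N\to\infty$ regardless of which subset $J$ we restrict to.
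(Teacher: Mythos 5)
Your proof is correct and runs on the same H\"older estimate as the paper's; the only difference is that the paper pairs the finite partial sum with itself, so that the bound $\bigl\|\sum_{i\in \mathbb{J}} m_i\Gamma_i^*\Lambda_i f\bigr\|\le \|m\|_\infty B^{1/p}\bigl(\sum_{i\in \mathbb{J}}\|\Lambda_i f\|^q\bigr)^{1/q}$ gives the norm Cauchy criterion for unconditional convergence directly, whereas you pair with an arbitrary $g$ and upgrade weak convergence of every sub-series to unconditional norm convergence via Lemma \ref{l: uncond convergence series} --- both routes are valid and essentially equivalent. You are also right to flag the outer square root as an artifact: the computation (yours and the paper's alike) yields $\|S_{m\Gamma\Lambda}\|\le \|m\|_\infty B^{1/p}(B')^{1/q}$, and the square-rooted expression cannot be correct since it fails the obvious homogeneity check under rescaling $\Lambda$ and $\Gamma$.
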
 
\begin{proof} 
According to the definition, $F $ and $ G $ are $p$-Bessel sequence and $q$-Bessel sequence, respectively. For a finite set $\mathbb{J}\subset \mathbb{I}$ and $f \in \mathcal{H}$ put $ g=\sum_{i \in \mathbb{J} } m_i \langle f , f_i \rangle g_i$. Then 
\begin{align*} 
\|\sum_{i \in \mathbb{J}} m_i \langle f, f_i\rangle g_i \|^2 & = |\langle g,g \rangle | = |\sum_{i \in \mathbb{J}} \langle m_i \langle f, f_i\rangle g_i , g \rangle | \\ 
& \leqslant \sum_{i \in \mathbb{J}} | m_i \langle f , f_i \rangle \langle g_i , g \rangle | \leqslant \sum_{i \in \mathbb{J}}| m_i | | \langle f , f_i \rangle | | \langle g_i , g \rangle | \\ & \leqslant 
(\sup_i |m_i|) (\sum_{i \in \mathbb{J}} | \langle f , f_i \rangle |^q)^{\frac{1}{q}} (\sum_{i \in \mathbb{J}} |\langle g_i , g \rangle |^p )^{\frac{1}{p}} \\ 
& \leqslant (\sup_i |m_i|) B'^{\frac{1}{q}} \|g\|(\sum_{i \in \mathbb{J}} |\langle f,f_i \rangle |^p )^{\frac{1}{p}}.
\end{align*} 
This implies that $\sum_{i \in \mathbb{J}} m_i \langle f, f_i\rangle g_i$ converges unconditionally for every $f \in \mathcal{H}$. Thus $ S_{m FG} $ is well-defined unconditionally. The above computations imply that 
\[ \|S_{m FG } (f) \|= \|\sum_{i } m_i \langle f, f_i\rangle g_i \| \leqslant {\parallel m \parallel}_\infty B^{\frac{1}{p}} B'^{\frac{1}{q}} \|f \|.\] 
\end{proof} 
In the case that $(m,G,F)$ is an $m$-$(p,q)$-pair Bessel, $(m,G,F)$ will be called an \textbf{$m$-$(p,q)$-pair frame} if $ S_{mFG} $ is invertible. In the other contexts $p$-Bessel sequence and $q$-Bessel sequence are called $\ell^p$-Bessel sequence and $\ell^q$-Bessel sequence. Theorem \ref{t: (p q)-pair frames } shows that $\ell^p$-Bessel sequences and $\ell^q$-Bessel sequences are pairable i.e. a pair of sequences of these types can construct a pair Bessel. If we let $\ell=\ell^p$, then $\ell^*=\ell^q$ and $\ell$,$\ell^*$ are pairable. For a more general scalar sequence space $\ell$, it is proved that $\ell$ and $\ell^*$ are also pairable \cite{2012AdjointofPairFrames-A.FereydooniA.Safapour}. \\ 
\section{\bf {\bf \em{\bf Near Identity Pair Frames }}}
Christenson and Laugesen \cite{2009ApproxChris} introduced the notion of " approximately dual frames " in the context of ordinary Bessel sequences. Here we present a new notion similar to " approximately dual frames " which extends that notion. Also some results of \cite{2009ApproxChris} are generalized in this section.
\begin{defn}\label{d: near identity pair frame} 
Let $F=\{f_i\}$,$G=\{g_i\} \subset \h $ and $m=\{m_i\}$ be a scalar sequence. A pair Bessel $(m,G,F)$ will be called a \textbf{near identity pair frame} for $\h$ if for a nonzero $ \alpha \in \mathbb{C} $, 
\begin{equation}\label{e:near identity pair frame} 
\| I - \alpha S_{ m FG } \| < 1 .
\end{equation} 
The triple $(m,G,F)$ will be said to be a \textbf{positively near identity pair frame} for $\h$ if $ \alpha \in (0,\infty) $ and $ S_{ m FG } $ is self adjoint.
\end{defn} 
In Theorem \ref{t: near identity and pair frames and operators}(1), it is proved that every near identity pair frame is really a pair frame. The operator $S_{ m FG }$ corresponding to a near identity pair frame $(m,G,F)$ can be regarded as a perturbation of the identity operator. With our definition, " approximately dual frames " are " positively near identity $(2,2)$-pair frame". In the other words, approximately dual frames are a very special case of pair frames or even near identity pair frames. Also every frame is a positively near identity $(2,2)$-pair frame; see \ref{t: frame}(3). 
The following result shows that the nature of positively near identity pair frames are very similar to ordinary frames.

\begin{prop} \label{c: }
Let $F=\{f_i\}$, $G=\{g_i\} \subset \h $ and $m=\{m_i\}$ be a scalar sequence. $(m,G,F)$ is a positively near identity pair frame for $\h$ if and only if there are constants $A,B>0$ such that 
\[ A \leqslant S_{m FG} \leqslant B. \] 
\end{prop} 
\begin{proof} 
Since $S_{ m FG }$ is selfadjoint and $\alpha$ is positive, a reasoning like the proof of the equivalence $(2) \Leftrightarrow(3)$ of Theorem \ref{t: frame}, establishes the claim. It is enough to replace $S_{ F} $ with $S_{ m FG }$.
\end{proof} 
Figure (1) shows the relations between some different kinds of frames proposed here. 
\begin{center} 
\includegraphics[width=3in]{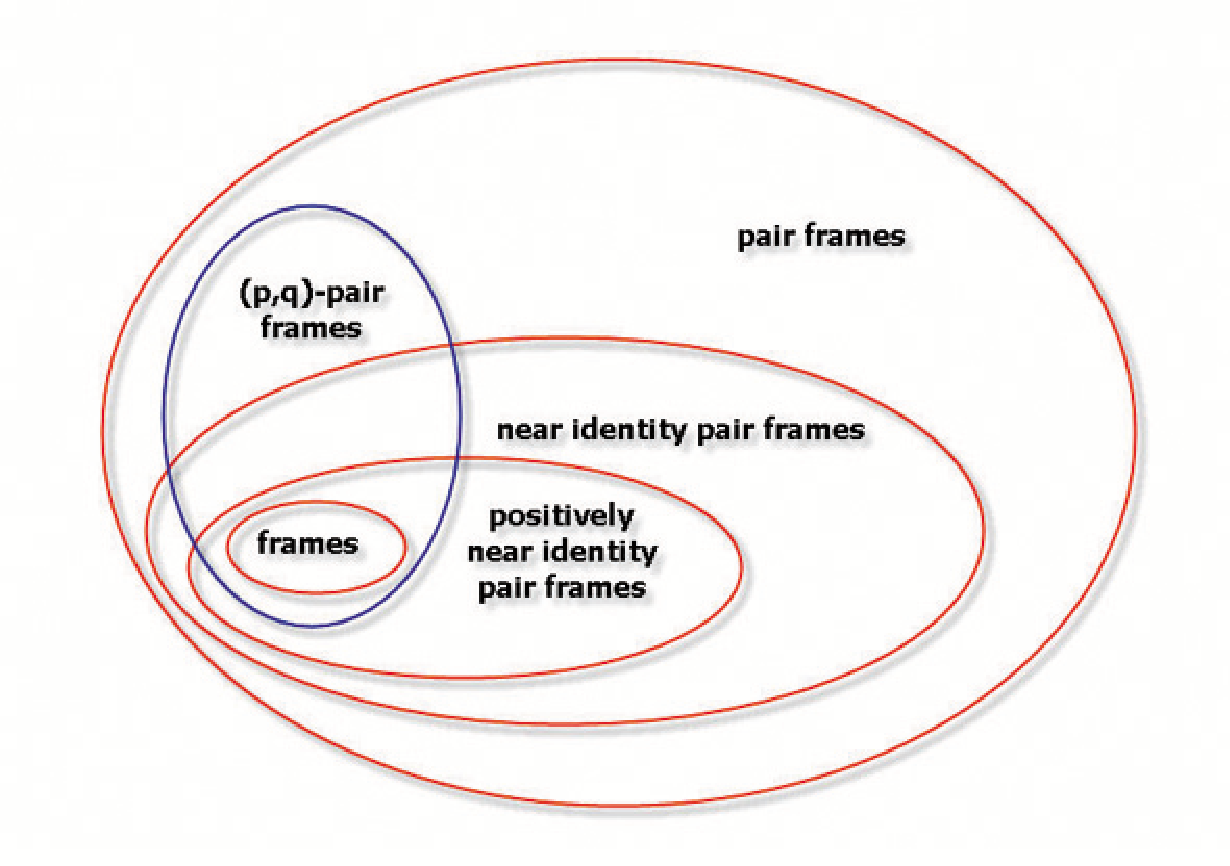} 
\end{center} 
\begin{center} 
Figure 1.
\end{center} 



\begin{thm}\label{t: near identity and pair frames and operators} 
Let $F=\{f_i\}$,$G=\{g_i\} \subset \h$ and $m=\{m_i\}$ be a scalar sequence. Also suppose that $V,W \in \mathcal{B}(\mathcal{H})$. Then 
\begin{enumerate} 
\item If $(m,G,F)$ is a near identity pair frame, then it is a pair frame.
\item If $(m,G,F)$ is a pair Bessel, then $(m, W G, V F)$ is a pair Bessel.
\item If $(m,G,F)$ is a pair frame and V,W are invertible, then $(m, W G, V F)$ is a pair frame.
\end{enumerate} 
\end{thm}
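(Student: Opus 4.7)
The plan rests on two observations, one algebraic and one from standard operator theory. The algebraic one is the identity
\[
 S_{m,\Gamma V,\Lambda W} \;=\; V^{*} S_{m\Gamma\Lambda}\,W,
\]
valid whenever the right-hand side makes sense, because $(\Gamma_i V)^{*}(\Lambda_i W) = V^{*}\Gamma_i^{*}\Lambda_i W$ and both $V^{*}$ and $W$ are bounded and hence may be pulled outside the sum by continuity. The operator-theoretic one is the Neumann-series fact that if $\|I-T\|<1$ on a Banach space then $T$ is invertible; this is what is needed to convert the near-identity hypothesis into genuine invertibility.

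For part (1), the hypothesis gives a nonzero $\alpha\in\mathbb{C}$ with $\|I-\alpha S_{m\Gamma\Lambda}\|<1$. The Neumann series then shows $\alpha S_{m\Gamma\Lambda}$ is invertible, and because $\alpha\neq 0$ the operator $S_{m\Gamma\Lambda}$ itself is invertible; by Definition \ref{d: pair frame} this means $(m,\Gamma,\Lambda)$ is a pair frame. Note that the near-identity definition only requires a single scalar $\alpha$ in $\mathbb{C}$, so no self-adjointness or positivity is used here, and this part is essentially immediate from the Neumann series.

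For part (2), fix $f\in\mathcal{H}$. Since $(m,\Gamma,\Lambda)$ is a pair Bessel, the series $\sum_i m_i\Gamma_i^{*}\Lambda_i(Wf)$ converges in $\mathcal{H}$; applying the bounded operator $V^{*}$ to the partial sums and invoking its continuity gives convergence of $\sum_i m_i V^{*}\Gamma_i^{*}\Lambda_i Wf = \sum_i m_i(\Gamma_i V)^{*}(\Lambda_i W)f$. Thus $S_{m,\Gamma V,\Lambda W}$ is well defined on all of $\mathcal{H}$, establishing the pair-Bessel property, together with the identity displayed above. For part (3), combine part (2) with invertibility: since $V$ is invertible, so is $V^{*}$; $S_{m\Gamma\Lambda}$ is invertible by hypothesis; and $W$ is invertible; hence the composition $V^{*}S_{m\Gamma\Lambda}W = S_{m,\Gamma V,\Lambda W}$ is invertible, making $(m,\Gamma V,\Lambda W)$ a pair frame.

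The only point requiring any real care is the pointwise convergence argument in part (2); everything else reduces to the factorization identity and one invocation of the Neumann series. I do not expect any serious obstacle: the hard work has already been done in Proposition \ref{p: Bessels-pair Bessels} (welldefinedness yields boundedness) and in Theorem \ref{t: frame} (the $(2)\!\leftrightarrow\!(3)$ equivalence, whose Neumann-series mechanism is exactly what drives part (1)).
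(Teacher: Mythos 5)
Your proposal is correct and follows essentially the same route as the paper: the Neumann series for part (1), the factorization $S_{m,\Gamma V,\Lambda W}=V^{*}S_{m\Gamma\Lambda}W$ for part (2), and invertibility of the composition for part (3). The only difference is that you spell out the convergence justification in part (2) (evaluating the Bessel series at $Wf$ and applying the continuous operator $V^{*}$ to the partial sums), which the paper leaves implicit; this is a welcome addition, not a deviation.
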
 
\begin{proof} 
Recall that when $(m,G,F)$ is a pair Bessel, then the operator $S_{m FG} $ is well-defined. \\ 
(1). If 
\[ \| I - \alpha S_{ m FG } \| < 1 , \] 
for some nonzero $ \alpha \in \mathbb{C} $, then $\alpha S_{m FG} $ is invertible, \cite[VII.2.1]{1980FuncConw}. Since $ \alpha$ is nonzero, then $S_{m FG} $ is invertible and hence $(m,G,F)$ is a pair frame. \\ 
(2). The equation 
\[ V S_{m FG} W^* (f) = \sum_i m_i {V}^* \langle W^* f , g_i \rangle f_i = \sum_i m_i \langle f , W g_i \rangle {V} f_i = S_{m, VF , WG} (f) . \] 
proves (2).\\ 
(3). We know that the invertibility of $V , $ $S_{m FG}$ and $W$ imply the invertibility of $ V S_{m FG} W^* =S_{m, VF , WG} $. This fact and prove (3).
\end{proof} 
 Kutyniok and Okoudjou studied scalable frames \cite{2013 Kutyniok-Okoudjou Scalable Frames},  those frames $F= \{m_i f_i\}$ which can be a tight frame only by multiplying with a sequence of scalars, $F' = \{m_i f_i\}$. If this situation holds, the frame operator is identity and we do not need to compute inverse of frame operator. But, when we deal with a non-scalable frame, using method of Proposition 4.4. help us for obtaining a sequence of near identity pair frame operator converging to the identity operator.  The importance of the " near identity pair frames " is that the inverse of its corresponding operator $ S_{m FG}$, can be written in " Neumann series " as 
\[ S_{ m FG }^{-1}=\alpha \sum_{n=0}^{\infty}(I-\alpha S_{ m FG })^n , \] 
which is very useful in the computational aspects. For this types of frames next theorem gives a sequence of operators ${\{J^{(N)}\}}_{N=0}^{\infty}$ converging to the identity operator.

\begin{prop}\label{p: }
Let $F=\{f_i\}$,$G=\{g_i\} \subset \h $ and $m=\{m_i\}$ be a scalar sequence. Suppose that $(m,G,F)$ is a near identity pair frame for some nonzero $ \alpha \in \mathbb{C} $. For every $ N \in \mathbb{N} $, define 

\[ {(S_{m FG}^{-1})}_N= \alpha \sum_{n=0}^N { (I - { \alpha S_{m FG}} )}^n, \] 
and let 
\[ J^{(N)}: \h \to \h, \quad J^{(N)}(f) = \sum_{i=1}^{\infty} m_i \langle f , g_i\rangle {(S_{m FG}^{-1}})_N f_i. \] 
Then 
\[ \| I - J^{(N)} \| \rightarrow 0 , \s as \s N \rightarrow \infty , \] 
and 
\[ \| I - J^{(N)} \| \leqslant { \| I - \alpha S_{m FG} \| }^{N+1} , \] 
for every $ N \in \mathbb{N} $. There is a sequence $F_N=\{f_{iN}\}_i \subset \h $ such that $(m,G,F_N)$ is a near identity pair frame for every $ N \in \mathbb{N} $ and their operators $S_{m F_NG}= J^{(N)}$ converges to identity operator as $N \rightarrow \infty $. 

\end{prop} 
\begin{proof} 
By the definition of near identity pair frames we have 
\[ \| I - \alpha S_{ m FG } \| < 1 . \] 
Therefore, 
\[ S_{ m FG }^{-1}=\alpha \sum_{n=0}^{\infty}(I-\alpha S_{ m FG })^n , \] 
and its partial sum
\[ {(S_{m FG}^{-1})}_N= \alpha \sum_{n=0}^N { (I - { \alpha S_{m FG}} )}^n, \] 
is convergent for each $ N \in \mathbb{N} $. For $f \in \h $,
$$ J^{(N)}(f) = \sum_{i=1}^{\infty} m_i \langle f , g_i\rangle {(S_{m FG}^{-1}})_N f_i= {(S_{m FG}^{-1})}_N {S_{m FG}^{}}(f) . $$ 
Therefor $J^{(N)} $ is well-defined for every $ N \in \mathbb{N} $.
So 
\begin{align*} 
J^{(N)} & = {(S_{m FG}^{-1})}_N {S_{m FG}^{}}\\ 
& = ( {\alpha } \sum_{n=0}^{N}(I - \alpha S_{m FG})^n ) S_{m FG} \\ 
& = ( \sum_{n=0}^{N}(I - \alpha S_{m FG})^n ){\alpha } S_{m FG} \\ 
& = ( \sum_{n=0}^{N}(I - \alpha S_{m FG})^n )(I-(I - \alpha S_{m FG} ) ) \\ 
& = I - {(I - \alpha S_{m FG})}^{N+1}. \\ 
\end{align*} 
Hence 
\[ \| I - J^{(N)} \| \leqslant \| {(I - \alpha S_{m FG} )}^ {N+1} \| \leqslant { \| I - \alpha S_{m FG} \| }^{N+1} . \] 

Since $\| I - \alpha S_{m FG} \| < 1$ by the assumption, then $\| I - J^{(N)} \| \rightarrow 0 $ as $ N \rightarrow \infty $.
\end{proof} 
\vskip 0.8 true cm

The authors \cite{2012BanachPairFrames-A.FereydooniA.Safapour} have shown that the  notion of pair frames generalizes some various types of frames. Some characterizations of Banach pair frames are presented in \cite{2012CharacterizationsforBanachPairFrames-A.FereydooniA.Safapour}. Adjoint of pair frames is considered by the authors in \cite{2012AdjointofPairFrames-A.FereydooniA.Safapour}. \\\\


\end{document}